\newcommand{\al}{\alpha}
\newcommand{\de}{\delta}
\newcommand{\auskommentieren}[1]{}
\newcommand{\beq}{\begin{equation}}
\newcommand{\eeq}{\end{equation}}
\newcommand{\bea}{\begin{equation}\begin{aligned}}
\newcommand{\eea}{\end{aligned}\end{equation}}
\newtheorem{theorem}{Theorem}[subsection]
\theoremstyle{definition}
\newtheorem{defi}[theorem]{Definition}
\numberwithin{equation}{section}
\numberwithin{theorem}{section}
\title{Non-collapsing for nearly spherical closed convex curves under power curvature flow}
\author{Heiko Kr\"oner}
\begin{document}
\maketitle
\footnotetext{
\textsc{Eberhard Karls Universit\"at, Mathematisches Institut, 
Auf der Morgenstelle 10,
D-72076 T\"ubingen,
Germany}\\
\textit {E-mail:} \href{mailto:kroener@na.uni-tuebingen.de}{kroener@na.uni-tuebingen.de} \\
\textit {Url:} \href{http://na.uni-tuebingen.de/~kroener/}{http://na.uni-tuebingen.de/$\sim$kroener/}}

\begin{abstract}
We show non-collapsing for the evolution of nearly spherical closed convex curves in $\mathbb{R}^2$ under power curvature flow
\beq \label{0}
\dot x = -\kappa^p\nu, \quad p>1,
\eeq
where $\nu$ is the unit normal and $\kappa$ the curvature,
using two-point-methods as in \cite{ALM}. 
\end{abstract}

We recall the definition of $\de$-non-collapsing, cf. e.g. \cite{A}.
\begin{defi}
A mean-convex hypersurface $M$ bounding an open region $\Omega$ in $\mathbb{R}^{n+1}$ is $\de$-non-collapsed (on the scale of the mean curvature) if for every $x\in M$ there is an open ball $B$ of radius $\frac{\de}{H(x)}$ contained in $\Omega$ with $x \in \partial \Omega$.
\end{defi}
It was proved in \cite{SW} that any compact mean-convex solution of the mean curvature flow is $\de$-non-collapsed for some $\de>0$ and closely related statements are deduced in \cite{W}; in both cases a lengthy analysis of the properties of the mean curvature flow is needed for the proof. 

In a very short paper, which uses only two-point-methods and the maximum principle,  Andrews shows in $\cite{A}$ that a mean-convex, closed, embedded and $\de$-non-collapsed initial hypersurface remains  $\de$-non-collapsed under mean curvature flow. In \cite{ALM} this result is extended to fully nonlinear homogeneous degree one, concave or convex, normal speeds.

In this short note we consider the so-called power curvature flow (PCF) for closed convex curves in $\mathbb{R}^2$ given by the equation (\ref{0}). This is a special case of the flow
\beq \label{10}
\dot x = -H^p\nu, \quad p>1,
\eeq
for closed mean-convex hypersurfaces in $\mathbb{R}^{n+1}$. 
It was proved in \cite{S} that the flow (\ref{10}) (for all $n$) shrinks a convex initial hypersurface to a point and in \cite{SS} that after a proper rescaling a nearly umbilical (i.e. the ratio of the biggest and the smallest principal curvature is sufficiently close to 1) initial hypersurface converges in $C^{\infty}$ to a unit sphere.

Our aim is to prove Theorem \ref{5} using the two-point-method from \cite{ALM}. 

\begin{theorem} \label{5}
Let  $X: \mathbb{S}^1\times [0,T) \rightarrow \mathbb{R}^{n+1}$ be a family of smooth convex embeddings evolving by PCF (\ref{0}). There is $\mu_0=\mu_0(p)>1$ so that if $M_0= X( M, 0)$ is $\mu$-non-collapsed with some $1\le \mu \le \mu_0$, then
$M_t= X( M, t)$ is $\mu$-non-collapsed for all $t \in [0, T)$.
\end{theorem}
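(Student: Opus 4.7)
The plan is to adapt the two-point maximum principle of Andrews--Langford--McCoy \cite{ALM} to the degree-$p>1$ setting. First, express $\mu$-non-collapsing via a two-point function: in the natural convention where $\mu\ge 1$ is the relevant regime (so that larger $\mu$ is a weaker non-collapsing condition), $M_t$ is $\mu$-non-collapsed iff
$$Z(x,y,t) := \mu\,\kappa(x)\,|x-y|^2 - 2\langle x-y,\nu(x)\rangle \;\ge\; 0 \quad \text{for all } x,y\in M_t,\ x\ne y.$$
A Taylor expansion in arc length $s,s'$ around the diagonal $y=x$ gives $Z=(\mu-1)\kappa(x)(s-s')^2+O((s-s')^3)$, which for $\mu>1$ is strictly positive in a neighbourhood of the diagonal, uniformly on compact subsets of $[0,T)$. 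Hence any first violation of $Z\ge 0$ must occur at a strictly off-diagonal pair.

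Suppose for contradiction that $t_0\in(0,T)$ is the first time $Z$ attains the value $0$, at a pair $(x_0,y_0)$ with $x_0\ne y_0$. Parameterise $M_{t_0}$ by arc length $s$ near $x_0$ and $s'$ near $y_0$. The parabolic maximum principle yields at $(x_0,y_0,t_0)$ the conditions $Z=0$, $\partial_s Z=\partial_{s'}Z=0$, $\partial_t Z\le 0$, and positive semi-definiteness of the $(s,s')$-Hessian of $Z$. Using the Frenet relation $\nu_s=\kappa\tau$, the condition $\partial_sZ=0$ reads, with $W:=x-y$,
$$\mu\,\kappa_s\,|W|^2+2(\mu-1)\kappa\,\langle W,\tau(x)\rangle=0, \quad \text{hence} \quad \kappa_s=-\frac{2(\mu-1)\kappa}{\mu\,|W|^2}\,\langle W,\tau(x)\rangle=O(\mu-1).$$
This quantitative smallness of $\kappa_s$ at the critical point is the central mechanism of the argument; an analogous identity from $\partial_{s'}Z=0$ relates $\langle W,\tau(y)\rangle$ to $\langle\tau(y),\nu(x)\rangle$.

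Next, compute $\partial_t Z$ at $(x_0,y_0,t_0)$ using $\partial_t x=-\kappa^p\nu$, $\partial_t\nu=p\kappa^{p-1}\kappa_s\tau$, and the curvature evolution
$$\partial_t\kappa = p\kappa^{p-1}\kappa_{ss}+p(p-1)\kappa^{p-2}\kappa_s^2+\kappa^{p+2}.$$
The $\kappa_{ss}$-contribution to $\partial_t Z$ is controlled from above by combining $\partial_s^2 Z$ with a suitable multiple of the mixed derivative $\partial_s\partial_{s'}Z$ and invoking positive semi-definiteness of the two-point Hessian, exactly as in \cite{ALM}. After the cancellations already present in the degree-one case, the remaining expression splits into terms of definite sign and order $\mu-1$ (coming, e.g., from $\kappa^{p+2}|W|^2$ combined with $\langle W,\nu\rangle=\tfrac12\mu\kappa|W|^2$ imposed by $Z=0$), and a genuinely new contribution $\mu\,p(p-1)\kappa^{p-2}\kappa_s^2\,|W|^2$ together with analogous $\kappa_s$-terms from $\partial_t\nu$. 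By the first-order identity, this new contribution is $O((\mu-1)^2)$, so choosing $\mu_0=\mu_0(p)>1$ close enough to $1$ makes the $O(\mu-1)$ good terms dominate the $O((\mu-1)^2)$ bad terms, forcing $\partial_tZ>0$ at $(x_0,y_0,t_0)$ and contradicting the assumption.

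The main obstacle is precisely this new $F''$-term $p(p-1)\kappa^{p-2}\kappa_s^2$ in $\partial_t\kappa$, which has no analogue in the degree-one analysis of \cite{ALM} but becomes the principal source of sign trouble for $p>1$; its treatment relies on converting the first-order criticality condition into the quadratic-in-$(\mu-1)$ smallness of $\kappa_s^2$. This mechanism simultaneously explains the existence of $\mu_0>1$ and the fact that $\mu$ must nevertheless be kept sufficiently close to $1$ for the preservation of non-collapsing to hold.
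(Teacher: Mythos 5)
Your setup (the same two-point quantity, a first time of violation at an off-diagonal pair, the maximum-principle inequalities, and the use of the first-order criticality condition) matches the paper's ALM-style argument, but the proposal misidentifies the genuinely new difficulty and, as a result, the final step does not close. The term $p(p-1)\kappa^{p-2}\kappa_s^2$ coming from $\partial_t\kappa$ is not a source of trouble at all: in your convention it contributes $+\mu\,p(p-1)\kappa^{p-2}\kappa_s^2|W|^2\ge 0$ to $\partial_t Z$, i.e.\ it pushes in exactly the direction you want, and in the paper's function $w=Z-\mu\kappa$ it appears as $-\mu p(p-1)\kappa^{p-2}\|D\kappa\|^2\le 0$ in (\ref{1}) and is simply discarded. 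So the ``central mechanism'' you propose (criticality $\Rightarrow\kappa_s^2=O((\mu-1)^2)$, used to kill this term) treats a term that needs no treatment. Note also that your own identity gives $|\kappa_s|\le 2(\mu-1)\kappa/(\mu d)$, so $\kappa_s$ is \emph{not} $O(\mu-1)$ uniformly: every term carries powers of $d$ and $\kappa$ as well, and bookkeeping purely in powers of $\mu-1$ is not enough.

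The actual new obstruction for $p>1$ is the failure of degree-one homogeneity (and of the concavity/convexity structure used in \cite{ALM}) in the zeroth-order terms comparing the speed at $y$ with its linearization at $x$, i.e.\ $\frac{2}{d^2}\kappa_y^p-\frac{2p}{d^2}\kappa^{p-1}\kappa_y$ in (\ref{3}). The constant-order parts cancel exactly against (\ref{12}) and the last line of (\ref{3}), but the quadratic Taylor error leaves an unfavorable term of size about $(p-1)\tfrac p2\,\eta^2\,\kappa^p/d^2$ with $0\le\eta\le\mu-1$ (see (\ref{11})). This is $O((\mu-1)^2)$ but carries the factor $1/d^2$, whereas the favorable term $(p-1)\mu(\mu-1)\kappa^{p+2}$ does not; for short chords no smallness of $\mu-1$ makes the latter dominate, so your concluding ``good $O(\mu-1)$ beats bad $O((\mu-1)^2)$'' argument fails. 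The paper closes this by a dichotomy absent from your proposal: if $d\ge 1/Z$ then $1/d^2\le Z^2\approx\mu^2\kappa^2$ and the $\kappa^{p+2}$ term wins; if $d<1/Z$ then the geometry of \cite[Lemma 6]{ALM} forces $\alpha\le\pi/4$ in (\ref{7}), so the bracket equals $-2\cos^2\alpha\le -1$ and $\frac{4p}{d^2}\kappa^{p-1}(Z-\kappa)\left(-2\cos^2\alpha\right)\le-\frac{4p}{d^2}\kappa^p(\mu-1+O(\delta))$ supplies a favorable term at the same $1/d^2$ scale. Relatedly, the first-order terms generated by $\partial_t\nu=p\kappa^{p-1}\kappa_s\tau$ are of order $\mu-1$, the same order as the good terms and of indefinite sign; they are neutralized only through the exact combination into the $-2\cos^2\alpha$ bracket via (\ref{2}), not by smallness. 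Without the Taylor-expansion bookkeeping of the non-homogeneous terms and this chord-length/angle case analysis, the proposal has a genuine gap.
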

\begin{proof}
W.l.o.g. we can assume $1<\mu\le \mu_0$, otherwise consider a sequence $1<\mu_k\rightarrow 1$.
From \cite[Lemma 2.3.4]{G} we get the evolution equation
\beq
\frac{d}{dt}(\kappa^p) -p\kappa^{p-1}\Delta \kappa^p = p \kappa^{p-1} \kappa^{2+p}
\eeq
and after a short calculation
\beq
\frac{d}{dt}\kappa - p \kappa^{p-1}\Delta \kappa = \kappa^{2+p} + p(p-1)\kappa^{p-2}\|D\kappa\|^2.
\eeq
Following \cite{ALM} we define
\beq
Z(x,y,t) := 2\frac{\left<X(x,t)-X(y,t), \nu(x,t)\right>}{\|X(x,t)-X(y,t)\|^2} = 2 \left<w, \nu_x\right>d^{-1}, 
\eeq
for $t \in [0,T)$, $(x,y) \in (\mathbb{S}^1 \times \mathbb{S}^1) \backslash D$, where $D = \{(x,x): x \in \mathbb{S}^1\}$; we use the abbreviations
$d=\|X(x,t)-X(y,t)\|$, $w=d^{-1}\left(X(x,t)-X(y,t)\right)$, $\partial^x=\frac{\partial X}{\partial x}(x, t)$, $\nu_x = \nu(x,t)$, $\kappa_x=\kappa(x,t)$, etc; the sub- or superscript $x$ (in contrast to $y$) will be omitted sometimes.
The supremum of $Z$ with respect to $y$ gives the curvature of the largest interior sphere which touches at $x$.

We show that 
\beq \label{6}
w(x,y,t) = Z(x,y,t) - \mu \kappa(x,t) \le 0
\eeq
for $0\le t <T$.

In view of the assumptions (\ref{6}) holds for $t=0$.
We argue by contradiction. Let $\de>0$ be small, assume $\sup w(\cdot , \cdot , t)=\de$ for a $0<t  <T$ and choose $t$ minimal with these properties. Let $x,y \in \mathbb{S}^1$, so that $w(x, y, t)=\de$, then $x\neq y$. We choose normal coordinates $(x)$ and $(y)$ around $x$ and $y$, respectively, and obtain in $(x, y, t)$ by adapting the equations \cite[(8)-(11)]{ALM} 
\bea \label{1}
0 \le & \dot w - p \kappa^{p-1}\left(\frac{\partial ^2}{\partial x^2}w+2\frac{\partial ^2}{\partial x\partial y}w+\frac{\partial ^2}{\partial y^2}w\right) \\
=&  -\mu \frac{\partial}{\partial t}\kappa-\frac{2}{d^2}\kappa^p + \frac{2}{d^2}\kappa_y^p\left<\nu_y, \nu-dZw\right> + \frac{2}{d}\left<w, \nabla (\kappa^p)\right>\\&
+Z^2\kappa^p +\frac{2p}{d^2} \kappa^{p-1}\left(Z-\kappa\right) + p \kappa^{p+1}Z\\
&- \frac{2p}{d}\kappa^{p-1}\nabla \kappa\left<w, \partial^x\right> - p \kappa^{p}Z^2\\
&+ \frac{4p \mu}{d}\kappa^{p-1}\nabla\kappa\left<w, \partial^x\right>
+ \mu p \kappa^{p-1}\frac{\partial^2\kappa}{\partial x^2} \\
&-\frac{4p}{d^2}\kappa^{p-1}\left(Z-\kappa \right)\left<\partial^y, \partial^x\right>
-\frac{4p\mu }{d}\kappa^{p-1}\nabla \kappa\left<w, \partial^y\right> \\
&+\frac{2p}{d^2}\kappa^{p-1}\left(Z-\kappa_y\right)\\
= & -\mu \kappa^{p+2} - \mu p(p-1)\kappa^{p-2}\|D\kappa\|^2 + p \kappa^{p+1}Z\\
& - \frac{2(1+p)}{d^2}\kappa^p + \frac{4p}{d^2}\kappa^{p}\left<\partial^y, \partial^x\right> \\
&+ \frac{2}{d^2}\kappa_y^p-\frac{2p}{d^2}\kappa^{p-1}\kappa_y \\
& + \frac{4p}{d^2}\kappa^{p-1}Z-\frac{4p}{d^2}\kappa^{p-1}Z\left<\partial^y, \partial ^x\right>\\
&+ (1-p)\kappa^pZ^2 \\
&+ \frac{4p\mu}{d}\kappa^{p-1}\nabla \kappa\left< w, \partial^x-\partial ^y\right>.
\eea
The second line of the right-hand side of equation (\ref{1}) can be rewritten as
\beq
-\frac{4p}{d^2}\kappa^p\left(\frac{1+p}{2p}-\left<\partial^y, \partial^x\right>\right)
\eeq
and the fourth line as
\beq
\frac{4p}{d^2}Z\kappa^{p-1}\left(1-\left<\partial^y, \partial^x\right>\right).
\eeq
From 
\beq
\frac{\partial w}{\partial x}(x, y, t)=0
\eeq
we conclude
\beq \label{2}
\nabla \kappa = \frac{2}{\mu d}\left(\kappa -Z\right)\left<w, \partial^x\right>.
\eeq
so that the right-hand side of (\ref{1}) can be written as
\bea \label{3}
& -(\mu\kappa-pZ)\kappa^{p+1} - \mu p(p-1)\kappa^{p-2}\|D\kappa\|^2 \\
&+ \frac{2}{d^2}\kappa_y^p-\frac{2p}{d^2}\kappa^{p-1}\kappa_y \\
&+ (1-p)\kappa^pZ^2 \\
& +\frac{4p}{d^2}\kappa^{p-1} \left(Z-\kappa\right) \\
& \quad \quad \left(1-\left<\partial^y, \partial^x\right>+2\left<w, \partial^y-\partial^x\right>\left<w, \partial^x\right>+\frac{1-p}{2p} \right) \\
&+ \frac{2(p-1)}{d^2}Z\kappa^{p-1}. \\
\eea

Using 
\beq
\de \ge \sup Z(y, \cdot)-\mu \kappa_{y} = Z(x, y)-\mu \kappa_{y}
\eeq
we get
\beq
\kappa_x\le \kappa_y\le Z.
\eeq
Let us write $\kappa_y=(1+\eta+\frac{\tilde \de}{\kappa_x})\kappa_x$ with suitable $ 0 \le \eta\le \mu-1$ and $0 \le \tilde \de \le \de$.  We estimate the second line of (\ref{3}) from above by
\bea \label{11}
\frac{2}{d^2}\kappa^p& (1+\eta+\frac{\tilde \de}{\kappa_x})\left((1+\eta+\frac{\tilde \de}{\kappa_x})^{p-1}-p\right) \\
\le& \frac{2}{d^2}\kappa^p\left(1-p+(p-1)(1+\frac{p-2}{2})(\eta+\frac{\tilde \de}{\kappa_x}) ^2 + c_1(p) (\eta+\frac{\tilde \de}{\kappa_x})^3 \right),
\eea
$c_1(p)>0$ a constant, by Taylor's expansion.
We have
\beq \label{12}
\frac{4p}{d^2}\kappa^{p-1}(Z-\kappa)\frac{1-p}{2p} = \frac{2}{d^2}(1-p)(\mu-1+\frac{\de}{\kappa})\kappa^p.
\eeq
The  proof of \cite[Lemma 6]{ALM} shows that there are $\al \in [0, \frac{\pi}{2}]$ and normal coordinates $(x)$ and $(y)$ so that
\beq
|\left<w, \nu_x\right>| = \sin \al \quad \wedge\quad \left<\partial^y, \partial^x\right>=-\cos 2\al
\eeq
 and
\beq \label{7}
1-\left<\partial^y, \partial^x\right>+2\left<w, \partial^y-\partial^x\right>\left<w, \partial^x\right> = -2\cos^2 \al.
\eeq
There holds
\beq
d<\frac{1}{Z} \Rightarrow \al \le \frac{\pi}{4}.
\eeq
Estimating (\ref{3}) using (\ref{11}), (\ref{12}) and (\ref{7}) leads to
\bea \label{8}
0 \le & (p-1)\kappa^{p+2}\mu (1-\mu+O(\de)) \\
&+ \frac{2}{d^2}\kappa^p \left((p-1)(1+\frac{p-2}{2})\eta^2 + c_1(p) \eta^3+O(\de) \right)\\
& +\frac{4p}{d^2}\kappa^{p-1} \left(Z-\kappa\right) \\
& \quad \quad \left(1-\left<\partial^y, \partial^x\right>+2\left<w, \partial^y-\partial^x\right>\left<w, \partial^x\right> \right). 
\eea
Assume that $\de>0$ is small.
If $d \ge \frac{1}{Z}$ we use that $Z-\kappa\ge 0$ and (\ref{7}) to see that the right-hand side of (\ref{8}) is negative provided $\mu_0>1$ is sufficiently close to 1. 

Let us assume $d<\frac{1}{Z}$. Then the last summand in (\ref{8}) is estimated from above by
\beq
-\frac{4p}{d^2}\kappa^p(\mu-1+O(\de)).
\eeq
\end{proof}

\end{document}